\newcommand{\bd}{{\mathbb{D}}}
\newcommand{\bn}{{\mathbb{N}}}
\newcommand{\bc}{{\mathbb{C}}}
\newcommand{\bt}{{\mathbb{T}}}
\renewcommand{\a}{\alpha}
\renewcommand{\b}{\beta}
\renewcommand{\l}{\lambda}
\newcommand{\s}{\sigma}
\newcommand{\ep}{\varepsilon}
\newcommand{\p}{\varphi}
\renewcommand{\th}{\theta}
\newcommand{\oo}{\Omega}
\newcommand{\z}{\zeta}
\newcommand{\ovl}{\overline}
\newcommand{\lp}{\left(}
\newcommand{\rp}{\right)}
\newcommand{\lt}{\left}
\newcommand{\rt}{\right}
\numberwithin{equation}{section}
\newtheorem{theorem}{Theorem}[section]
\newtheorem{lemma}[theorem]{Lemma}
\newtheorem{proposition}[theorem]{Proposition}
\theoremstyle{definition}
\newtheorem{definition}[theorem]{Definition}
\newtheorem{remark}[theorem]{Remark}
\newtheorem{example}[theorem]{Example}
\begin{document}

\title[Resolvent of Toeplitz operators]
{On the growth of resolvent of Toeplitz operators}
%

\author{L. Golinskii}
\address{B. Verkin  Institute for Low Temperature Physics and Engineering 
of the National Academy of Sciences of Ukraine, 47 Nauky ave., Kharkiv 61103, Ukraine}
\email{leonid.golinskii@gmail.com; golinskii@ilt.kharkov.ua}

\author{S. Kupin}
\address{IMB, CNRS, Universit\'e de Bordeaux, 351 ave. de la Lib\'eration, 33405 Talence Cedex, France}
\email{skupin@math.u-bordeaux.fr}

\author{A. Vishnyakova}
\address{Holon Institute of Technology, 
52 Golomb Street, POB 305 Holon 5810201, Israel}
\email{annalyticity@gmail.com}

\begin{abstract}
We study the growth of the resolvent of a Toeplitz operator $T_b$, defined on the Hardy space, in terms of the distance to its spectrum $\s(T_b)$. We are primarily interested in
the case when the symbol $b$ is a Laurent polynomial (\emph{i.e., } the matrix $T_b$ is banded). We show that for an arbitrary such symbol the growth of the resolvent
is quadratic \eqref{quadresgr}, and under certain additional assumption it is linear \eqref{linresgr}. We also prove the quadratic growth of the resolvent for a certain class
of non-rational symbols.
\end{abstract}

\subjclass[2010]{Primary: 47B35; Secondary: 30H10, 47G10}
\keywords{Toeplitz operator, Hardy space, resolvent growth, Laurent polynomial with Jordan property, regular Laurent polynomials}

\maketitle

\section*{Introduction}

Let $T$ be a bounded linear operator on a Hilbert space. The complex number $\l$ is said to lie in the resolvent set $\rho(T)$ of $T$ if
there exists a bounded inverse operator $(T-\l)^{-1}$. The latter is known as the resolvent of $T$. The spectrum $\s(T)$ of $T$ is by definition
the complement of the resolvent set. The spectrum of $T$ is known to be a compact set on the complex plane. Recall also that the essential spectrum
of $T$, $\s_{ess}(T)\subset\s(T)$, is the set of complex numbers $\l$ so that $T-\l$ is not 
a Fredholm operator.

The spectrum and resolvent are the key concepts of the whole operator theory. An enormous amount of effort has been devoted to their study for
over a hundred years. In particular, the study of the behavior of the resolvent norm $\|(T-\l)^{-1}\|$, as $\l$ varies over the resolvent set,
has attracted much attention. Such norm grows unboundedly as the point $\l$ approaches the spectrum $\s(T)$. What is more to the point, there is
a universal below bound
\begin{equation}\label{belbou}
\|(T-\l)^{-1}\|\ge \frac1{{\rm dist}\,\bigl(\l,\s(T)\bigr)}\,, \qquad \l\in\rho(T),
\end{equation}
which actually holds for any closed operator on a Hilbert space.

In this paper we focus on upper bounds for the norm of resolvent of the form
\begin{equation}\label{upbou}
\|(T-\l)^{-1}\|\le \Phi\bigl({\rm dist}\,\bigl(\l,\s(T)\bigr)\bigr)\,, \qquad \l\in\rho(T),
\end{equation}
for particular functions $\Phi$ on the positive half-line, see variety of examples of such bounds in \cite[Section 5]{fago15} and references
therein. Apart from an intrinsic beauty, such bounds appear in at least two problems of analysis. The first one concerns the existence of 
hyperinvariant subspaces for a linear operator $T$, that is, subspaces invariant for any operator commuting with $T$. The key tool here is a
celebrated result of Lyubich and Matsaev, which relies heavily on bounds of the form \eqref{upbou}, see \cite{pel}. The second one is the problem from perturbation
theory related to the discrete spectrum of the trace (or general Schatten-von Neumann) class perturbation of certain operators, see, e.g., \cite{fago15, bcgk}. 
The point is that such discrete spectrum (counting algebraic multiplicities) agrees with the zero set (counting multiplicities) of a certain analytic function, 
known as the perturbation determinant. The bound \eqref{upbou} implies a similar bound for the perturbation determinant, and one obtains the so-called 
generalized Blaschke-type conditions for its zero set, see 
recent results by Borichev--Golinskii--Kupin \cite{bgk1, bgk2}, Favorov--Golinskii \cite{fago09, fago12, fago15}, and others.

In this paper we deal with two cases of $\Phi$
$$ \Phi(x)=\Phi_l(x):=\frac{C}{x}, \qquad \Phi(x)=\Phi_q(x):=\frac{C}{x}\Bigl(1+\frac1{x}\Bigr), $$
where $C$ is a positive constant depending on $T$. Note that, in view of \eqref{belbou}, bound \eqref{upbou} with $\Phi=\Phi_l$ is optimal. 
We refer to the first case as a {\sl Linear Resolvent Growth} (LRG) condition, see \eqref{linresgr} and also Benamara--Nikolski \cite{benk}, 
Nikolski--Treil \cite{nitr}, and to the second one as a {\sl Quadratic Resolvent Growth} (QRG) condition, see \eqref{quadresgr}.

Toeplitz operators have been enjoying immense popularity for many de\-ca\-des. They appear in a number of problems in different areas of analysis.
They give rise to interesting and difficult problems, and lead to beautiful results; see Grenander--Szeg\H o \cite{grsz} and Nikolski \cite{nik} 
for an account on the topic. Given a function $b\in L^\infty(\bt)$, the functional realization
of a Toeplitz operator $T_b$ on $H^2(\bt)$ is
$$  (T_b f)(t)=P_+\bigl(b(t)f(t)\bigr), \qquad f\in H^2(\bt), 
$$
where $P_+$ is the orthogonal projection from $L^2(\bt)$ onto $H^2(\bt)$, $b$ is a symbol of $T_b$. The matrix realization of the operator is
$$
T_b=\|b_{i-j}\|_{i,j\ge1}: \ell^2(\bn)\to \ell^2(\bn),
$$ 
or, in words, this is a semi-infinite matrix with constant diagonals. Here $b_n$ are the Fourier coefficients of the symbol $b$.

Among the results concerning the resolvent growth for general Toeplitz operators, let us mention the following one, which states that the LRG condition holds 
for $T_b$ as long as $\s(T_b)$ is a convex set (for an extended version of this result see \cite[Theorem 4.29]{bogr}).

We say that a Toeplitz operator is \emph{banded}, iff $b_n=0$ for all $|n|$ large enough.
It is clear that these are Toeplitz operators  with symbols being Laurent polynomials
\begin{equation*}
b(z):=\frac{b_{-m}}{z^m}+\ldots+b_0+\ldots+b_kz^k, \quad m,k\in\bn, \quad b_{-m}b_k\not=0.
\end{equation*}

In the paper we focus on two classes of such symbols. The class of Laurent polynomials with Jordan property (see Definition \ref{deflj}) is defined in terms
of the image $b(\bt)$ of the unit circle. The class of regular Laurent polynomials (see Definition~\ref{defr}) is defined in terms of the zeros
of the algebraic equation
$$ z^m\bigl(b(z)-w\bigr)=b_{-m}+\ldots+(b_0-w)z^m+\ldots+b_kz^{m+k}=0. 
$$
Section 1 of the paper is devoted to the discussion of certain properties of Laurent polynomials with Jordan property and regular Laurent polynomials.  
For instance, the former is a proper subclass of the latter, see Theorem \ref{ljreg}. Our main result, obtained in Section 2, states that the LRG 
condition \eqref{linresgr} holds as long as $b$ is a regular Laurent polynomial, see Theorem \ref{resol}. In the final Section \ref{s3}, we study the class of (non-rational)
symbols subject to \eqref{addas} and show that the corresponding Toeplitz operators $T_b$ obey the QRG condition, see Theorem~\ref{infcoef}.

The class of symbols $b$ for which the bound \eqref{upbou} holds with $T=T_b$ (and a single function $\Phi$ within the whole class) is small enough. By the
result of Treil \cite{tr}, given a sequence $\{\l_n\}$ of points on the unit disk, and a sequence $\{A_n\}$ of positive numbers, $A_n\to\infty$, there is a continuous
and unimodular symbol $b$ so that
$$ \|(T_b-\l_n)^{-1}\|>A_n. $$
So, \eqref{upbou} is certainly false for the class of all continuous symbols. 

As usual we let $\bt=\{z: |z|=1\}$ to be the unit circle of the complex plane, and $\bd=\bd_+=\{z: |z|<1\}, \  \bd_-=\{z: |z|>1\}$.

\subsection*{Acknowledgements} LG thanks S. Favorov for helpful discussions regarding the subject of Section \ref{s3}.

\section{Jordan property and regular Laurent polynomials}

For now, the main object under consideration is a Laurent polynomial
\begin{equation}\label{laur}
b(z):=\frac{b_{-m}}{z^m}+\ldots+b_0+\ldots+b_kz^k, \quad m,k\in\bn, \quad b_{-m}b_k\not=0.
\end{equation}

\subsection{Laurent polynomials with Jordan property}

\begin{definition}\label{deflj}
Let $b(\bt)$ be the image of the unit circle $\bt$ under the mapping $b$, a closed curve on the complex plane.
A Laurent polynomial $b$ is said to \emph{possess the Jordan property} (or, in short, $b$ is a \emph{LJ-polynomial}) if $b(\bt)$ is 
a Jordan curve (no self-intersections), and $b'(t)\not=0$, $t\in\bt$ (no cusps).
\end{definition}

It is quite unlikely that there exist transparent necessary and sufficient conditions in terms of the Laurent coefficients 
for a polynomial $b$ to be LJ-polynomial. Here is a simple sufficient one.

\begin{proposition}\label{ljcoef}
Assume that $\max(m,k)\ge2$, and one of the two conditions holds
\begin{equation}\label{coef1}
|b_{\pm1}|\ge |b_{\mp1}|+\sum_{j=2}^k j|b_j|+\sum_{j=2}^m j|b_{-j}|.
\end{equation}
Then $b(\bt)$ is a Jordan curve. Moreover, if one of the strict inequalities holds
\begin{equation}\label{coef2}
|b_{\pm1}|> |b_{\mp1}|+\sum_{j=2}^k j|b_j|+\sum_{j=2}^m j|b_{-j}|,
\end{equation}
then $b$ is a LJ-polynomial.
\end{proposition}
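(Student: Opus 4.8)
The plan is to show that, under \eqref{coef1}, the map $b$ restricted to $\bt$ is injective, and under the strict version \eqref{coef2} it moreover has non-vanishing derivative on $\bt$. Write $z=e^{i\theta}$ and view $b$ on $\bt$ as the $2\pi$-periodic function $\theta\mapsto b(e^{i\theta})$. The natural device is to isolate the dominant terms $b_{\pm1}z^{\pm1}$ and treat everything else as a perturbation that is small \emph{in a derivative sense}, which is exactly what the weights $j$ in the sums $\sum_{j\ge2} j|b_j|$ and $\sum_{j\ge2} j|b_{-j}|$ are designed to capture.

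First I would treat injectivity. Suppose $b(z_1)=b(z_2)$ with $z_1\ne z_2$ on $\bt$. The cleanest route is to write, for $z_1=e^{i\alpha}$, $z_2=e^{i\beta}$,
$$
0 = b(z_1)-b(z_2) = \int_{\beta}^{\alpha} \frac{\di}{\di\theta}\, b(e^{i\theta})\,\di\theta
= i\int_{\beta}^{\alpha} e^{i\theta}\, b'(e^{i\theta})\,\di\theta,
$$
and split $b'(e^{i\theta}) = -b_{-1}e^{-2i\theta}+b_1 + (\text{rest})$, where ``rest'' collects $\sum_{j\ge2}\bigl(jb_jz^{j-1} - jb_{-j}z^{-j-1}\bigr)$ evaluated at $z=e^{i\theta}$. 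The idea is that the two leading monomials $-b_{-1}z^{-1}+b_1 z$ in $zb'(z)$, divided out along an arc, cannot be cancelled by the rest because of the hypothesis. Concretely, one shows that $g(z):=zb'(z)$ maps $\bt$ into a half-plane through the origin, or more precisely stays in a set on which $\arg$ varies by less than $2\pi$ over the whole circle, forcing $\int g(e^{i\theta})e^{i\theta}\di\theta$ (a weighted average of values in that set) to be nonzero whenever the arc of integration is a proper, nontrivial arc; hence $b$ is injective on $\bt$ and $b(\bt)$ is a Jordan curve. An alternative that may be cleaner is a Rouché-type argument: for fixed $w\in b(\bt)$, compare the winding of $b-w$ along $\bt$ with that of $b_1 z - w$ (when $|b_1|$ is the dominant coefficient), using \eqref{coef1} to dominate $|b(z)-b_1 z|$ by... — but in fact the relevant domination is on the derivative, so the first (integral/argument) approach is the one I would push through.

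Next, for the strict inequality \eqref{coef2}, I would show $b'(t)\ne0$ for $t\in\bt$ directly. We have $t b'(t) = -b_{-1}t^{-1} + b_1 t + \sum_{j\ge2}\bigl(j b_j t^{j} - j b_{-j} t^{-j}\bigr)$. Assume WLOG that $|b_1|$ realizes the strict inequality (the case $|b_{-1}|$ dominant is symmetric, replacing $z$ by $1/z$, i.e. conjugating $b$ by the flip of $\bt$). Then
$$
|t b'(t)| \ge |b_1|\,|t| - |b_{-1}|\,|t|^{-1} - \sum_{j=2}^{k} j|b_j| - \sum_{j=2}^{m} j|b_{-j}|
= |b_1| - |b_{-1}| - \sum_{j=2}^{k} j|b_j| - \sum_{j=2}^{m} j|b_{-j}| > 0,
$$
since $|t|=1$. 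Hence $b'$ has no zero on $\bt$, which rules out cusps, and together with the Jordan-curve conclusion this gives that $b$ is an LJ-polynomial. The same lower-bound computation, now with the non-strict hypothesis, shows $|tb'(t)|\ge 0$ only, which is why \eqref{coef1} alone is not claimed to exclude cusps; injectivity under \eqref{coef1} must therefore be extracted from the argument-variation bound rather than from $b'\ne0$.

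The main obstacle is the injectivity step under the non-strict hypothesis \eqref{coef1}: one cannot use $b'\ne 0$, so one needs a genuinely global argument. The cleanest formulation I would aim for: set $h(\theta):=\re\bigl(\overline{c}\,b(e^{i\theta})\bigr)$ for a suitable unimodular constant $c$ and show that $h$ is monotone on a half-period and reflects on the other, i.e. that $b(\bt)$ is a ``convex-like'' curve traced monotonically — or, more robustly, show the curve $\theta\mapsto b(e^{i\theta})$ has a tangent direction $t b'(t)$ whose argument is confined to an arc of length $<2\pi$ even where $b'$ may vanish, using that $tb'(t)$ lies in the closed disk of radius $\sum_{j\ge2} j(|b_j|+|b_{-j}|)+|b_{-1}|$ centered at $b_1 t$, a disk which (by \eqref{coef1}) does not contain $0$ in its interior for any $t$. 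Controlling the boundary case where $0$ lies on that circle is the delicate point, and I would handle it by a limiting argument from the strict case \eqref{coef2} applied to $b_\ep(z)=b(z)+\ep z$ (or $+\ep/z$), noting that Jordan-ness of $b_\ep(\bt)$ for all small $\ep>0$ passes to $b(\bt)$ provided $b$ is still injective — which itself may require the direct argument, so some care with the order of limits is needed.
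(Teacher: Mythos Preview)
Your treatment of the second part (no zeros of $b'$ on $\bt$ under the strict hypothesis) is fine and matches the paper's argument almost verbatim.

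The gap is in the injectivity step. You correctly sense that the non-strict hypothesis \eqref{coef1} is the delicate point, but you then reach for heavy machinery (argument-variation of $tb'(t)$, Rouch\'e, a limiting approximation $b_\ep$) and end up, by your own admission, in a circular situation. None of this is needed. The paper's argument is purely algebraic and takes three lines: if $b(\z_1)=b(\z_2)$ with $\z_1\ne\z_2$ on $\bt$, isolate the $b_1$-term to get
\[
b_1(\z_1-\z_2)=b_{-1}\,\frac{\z_1-\z_2}{\z_1\z_2}+\sum_{|j|\ge2} b_j(\z_2^j-\z_1^j),
\]
divide through by $\z_1-\z_2$, and use the elementary fact that for \emph{distinct} unimodular $\z_1,\z_2$ and $|j|\ge2$ one has the \emph{strict} inequality
\[
\left|\frac{\z_1^{\,j}-\z_2^{\,j}}{\z_1-\z_2}\right|<|j|,
\]
since the left side is the modulus of a sum of $|j|$ unimodular numbers that cannot all coincide unless $\z_1=\z_2$. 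This strict bound on the difference quotient is precisely what turns the non-strict assumption \eqref{coef1} into a strict contradiction $|b_1|<|b_{-1}|+\sum_{j\ge2}j|b_j|+\sum_{j\ge2}j|b_{-j}|$. You never invoke this inequality; instead you try to control $b'$ pointwise, where the bound degenerates to $\le$ and forces all the detours you describe. Once you see that the relevant object is the secant $(\z_1^{\,j}-\z_2^{\,j})/(\z_1-\z_2)$ rather than the tangent $j\z^{\,j-1}$, the whole difficulty evaporates.
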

\begin{proof}
Suppose, on the contrary, 
that there are two different points $\z_1\not=\z_2$ on $\bt$, so that $b(\z_1)=b(\z_2)$. Then
$$ b_1(\z_1-\z_2)=\sum_{j\not=1} b_j(\z_2^j-\z_1^j)=b_{-1}\,\frac{\z_1-\z_2}{\z_1\z_2}+\sum_{|j|\ge2} b_j(\z_2^j-\z_1^j). $$
Assume that \eqref{coef1} holds with the upper sign.
For $\z_1\not=\z_2$ on $\bt$ one has
$$ \left|\frac{\z_1^j-\z_2^j}{\z_1-\z_2}\right|<|j|, \qquad |j|\ge2, $$
and we come to the inequality
$$ |b_1|<|b_{-1}|+\sum_{j=2}^k j|b_j|+\sum_{j=2}^m j|b_{-j}|, $$
which contradicts \eqref{coef1}. The case of the lower sign in \eqref{coef1} is the same, so the first statement is proved.

Concerning the second one, we write
\begin{equation*}
\begin{split}
b'(\z) &=\sum_{j=1}^k jb_j\z^{j-1}-\sum_{j=1}^m jb_{-j}\z^{-j-1} \\
&=b_1-b_{-1}\z^{-2}+\sum_{j=2}^k jb_j\z^{j-1}-\sum_{j=2}^m jb_{-j}\z^{-j-1}.
\end{split}
\end{equation*}
If $b'(\z_0)=0$ for some $\z_0\in\bt$, then
$$ |b_{\pm1}|\le |b_{\mp1}|+\sum_{j=2}^k j|b_j|+\sum_{j=2}^m j|b_{-j}|, $$
in contradiction with \eqref{coef2}, and the second statement follows.
\end{proof}

The case $m=k=1$ will be discussed later in Example \ref{laujac}.

\begin{example}
Let $b_1(z)=nz^{-1}-z^n$, so that \eqref{coef1} holds, and there is no self-intersection in $b_1(\bt)$. On the other hand,
$$ b_1'(z)=-\frac{n(z^{n+1}+1)}{z^2}, $$
and there are $n+1$ cusps. Next, let $b_2(z)=z^{-1}+z^2$, so
$$ b_2\Bigl(e^{\pm\frac{\pi i}3}\Bigr)=b_2(-1)=0, $$
and $b_2(\bt)$ is not a Jordan curve. On the other hand, $b_2'(z)=2z-z^{-2}\not=0$ on $\bt$.

The following example is discussed in \cite[Example 4.4.3]{dav}. Let $q\ge2$, and
$$ b_3(z)=z^{q+1}+4z+z^{-q+1}, \qquad b_3\bigl(e^{i\th}\bigr)=2e^{i\th}(2+\cos q\th), $$
so $b_3(\bt)$ is a Jordan curve. Moreover,
$$ b_3'(\z)=\z^{-q}\bigl((q+1)\z^{2q}+4\z^q-(q-1)\bigr)\not=0, \quad \z\in\bt, $$
so $b_3$ is a LJ-polynomial. The example shows that sufficient condition \eqref{coef2} is not necessary for $b$ to be a LJ-polynomial.
\end{example}

\subsection{Regular Laurent polynomials}

Given a Laurent polynomial $b$ and $w\in\bc$, consider the algebraic equation, which plays a key role in what follows
\begin{equation}\label{maalg}
b(z)-w =\frac{P(z,w)}{z^m}=0, \quad P(z,w)=z^m\bigl(b(z)-w\bigr),
\end{equation}
$$ P(z,w) =b_{-m}+\ldots+(b_0-w)z^m+\ldots+b_kz^{m+k}=b_k\prod_{j=1}^{m+k}(z-z_j(w)). $$
Denote by $Z(w)=\{z_j(w)\}_{j=1}^{m+k}$ the zero divisor (\emph{i.e.,} roots with multiplicities) of $P$. Each such divisor splits naturally
in three parts: interior, exterior, and unimodular ones
$$ Z_{in}(w):=Z(w)\cap\bd, \quad Z_{ext}(w):=Z(w)\cap\bd_-, \quad Z_{un}(w):=Z(w)\cap\bt. $$
Under $|Z(w)|$ we mean the number of points in $Z(w)$, counting multiplicities, so $|Z(w)|=m+k$.

Denote by $\oo(b)$ an open set on the plane so that
\begin{equation}\label{wind} 
\oo(b):=\{w\in\bc\backslash b(\bt): \ {\rm wind}\,(b(t)-w)=0, \ \ t\in\bt\}, 
\end{equation}
or, in words, the winding number of the curve $b(\bt)$ around $w$ is zero.

\begin{lemma}\label{divuni}
We have:
\begin{enumerate}
\item[1.]  For any Laurent polynomial $b$ and for each $w\in\oo(b)$,
\begin{equation}\label{divout}
\begin{split}
Z_{in}(w) &=\{0<|z_1(w)|\le\ldots\le |z_m(w)|<1\}, \quad \qquad |Z_{in}(w)|=m, \\
Z_{ext}(w) &=\{1<|z_{m+1}(w)|\le\ldots\le |z_{m+k}(w)|<\infty\}, \ |Z_{ext}(w)|=k,
\end{split}
\end{equation}
and $Z_{un}(w)=\emptyset$.

\item[2.]  For each $w'\in b(\bt)$ and each LJ-polynomial $b$, there are positive constants $0<r_1<1<r_2$, $r_j=r_j(b)$ independent of $w'$, with
\begin{equation}\label{separ1}
Z_{in}(w')\subset\{|z|\le r_1<1\}, \quad Z_{ext}(w')\subset\{|z|\ge r_2>1\}, \quad w'\in b(\bt). 
\end{equation}
\item[3.] The function $|Z_{in}(w')|$ is constant on $b(\bt)$, and
\begin{equation}\label{numdivin} 
|Z_{in}(w')|=m-1 \ {\rm or} \ m, \quad |Z_{ext}(w')|=k \ {\rm or} \ k-1, \quad w'\in b(\bt). 
\end{equation}
\end{enumerate}
\end{lemma}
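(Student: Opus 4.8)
The plan is to treat the three claims in turn. For Part~1 I would note that $w\in\oo(b)\subset\bc\bsl b(\bt)$ forces $b(t)\neq w$ on $\bt$, so $P(t,w)=t^m(b(t)-w)$ never vanishes on $\bt$ and $Z_{un}(w)=\emptyset$, while $P(0,w)=b_{-m}\neq0$ rules out $z=0$. Then I would count the zeros of $P(\cdot,w)$ inside $\bd$ by the argument principle: since $P_z'/P=m/z+b'(z)/(b(z)-w)$, integration over $\bt$ gives $m+{\rm wind}\,(b(t)-w)=m$, because $w\in\oo(b)$. Hence $|Z_{in}(w)|=m$, and from the degree count $|Z(w)|=m+k$ also $|Z_{ext}(w)|=k$. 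This step is routine.

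For Part~2 the idea is to peel off the boundary zero. Since $b$ is an LJ-polynomial, $b|_\bt$ is a continuous injection of the compact set $\bt$ onto $b(\bt)$, hence a homeomorphism; write $w'\mapsto t(w')$ for its (continuous) inverse, so $t(w')\in\bt$ is the unique solution of $b(t)=w'$, and $b'(t(w'))\neq0$ makes it a simple zero of $P(\cdot,w')$ and the only zero on $\bt$. Writing $P(z,w')=(z-t(w'))\,Q(z,w')$, the polynomial $Q(\cdot,w')$ has fixed leading coefficient $b_k$, coefficients depending continuously on $w'$ (polynomial division), and no zeros on $\bt$; its zeros stay in a fixed disk $\{|z|\le M(b)\}$ since the remaining coefficients are bounded over the compact set $b(\bt)$. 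A compactness argument then yields $\rho:=\inf\{\,|\,|\z|-1\,|:\ \z \text{ a zero of } Q(\cdot,w'),\ w'\in b(\bt)\,\}>0$: otherwise a subsequence of such zeros would converge to a point of $\bt$ which is a zero of some $Q(\cdot,w_*')$, a contradiction. Taking $r_1=1-\rho$, $r_2=1+\rho$, and observing that $Z_{in}(w')$ and $Z_{ext}(w')$ are exactly the zeros of $Q(\cdot,w')$ in $\bd$ and in $\bd_-$, we obtain \eqref{separ1}. I expect this to be the main obstacle: both halves of the LJ hypothesis are genuinely used (Jordan for the continuous dependence of $Q(\cdot,w')$ on $w'$, absence of cusps for the simplicity of the boundary zero), and the uniformity of $r_1,r_2$ over all $w'\in b(\bt)$ is the delicate point.

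For Part~3 I would again work with $Q(\cdot,w')$ (still for $b$ an LJ-polynomial). Since its zeros never meet $\bt$, the integer $|Z_{in}(w')|=\frac1{2\pi i}\oint_{|z|=1}Q_z'(z,w')/Q(z,w')\,dz$ depends continuously on $w'$, hence is constant on the connected curve $b(\bt)$; likewise for $|Z_{ext}(w')|$, with $|Z_{in}(w')|+|Z_{ext}(w')|=m+k-1$ since $t(w')\in\bt$ accounts for the missing root. To identify the value, I would approach a point $w_0'\in b(\bt)$ from $\oo(b)$ --- legitimate because $b(\bt)$ is a Jordan curve and $\oo(b)$ is its unbounded complementary component, whose boundary is all of $b(\bt)$ --- and pass to the limit in $|Z_{in}(w_n)|=m$ from Part~1. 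By continuity of roots, the $m+k$ zeros of $P(\cdot,w_n)$ converge, with multiplicities, to those of $P(\cdot,w_0')$; the only one of the latter on $\bt$ is the simple zero $t(w_0')$, so at most one zero of $P(\cdot,w_n)$ escapes to $\bt$ and the interior count changes by at most $1$ in the limit. Hence $|Z_{in}(w_0')|\in\{m-1,m\}$ and $|Z_{ext}(w_0')|=m+k-1-|Z_{in}(w_0')|\in\{k-1,k\}$.
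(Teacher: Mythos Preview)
Your proposal is correct and follows essentially the same approach as the paper: the Argument Principle for Part~1, the LJ hypothesis pinning down a single simple unimodular root together with a compactness argument for Part~2, and a perturbation from $b(\bt)$ into $\oo(b)$ combined with Part~1 for Part~3. Your explicit factorization $P(z,w')=(z-t(w'))\,Q(z,w')$ makes both the uniform separation in Part~2 and the constancy of $|Z_{in}(w')|$ in Part~3 more transparent than the paper's terse ``continuity of divisors and compactness'' phrasing, but the underlying ideas are identical.
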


\begin{proof}
Clearly, $Z_{un}(w)=\emptyset$ for any $w\notin b(\bt)$. The first statement \eqref{divout} now follows directly from 
the Argument Principle, cf. \cite{dur}.

To prove \eqref{separ1}, note that $Z_{un}(w')=\{\l(w')\}$ is a single, simple (of multiplicity one) point on $\bt$. This is exactly the Jordan property
for Laurent polynomials, that is, there are no self-intersections and no cusps, respectively. When $w'$ traverses $b(\bt)$, the point $\l(w')$ traverses $\bt$. 
For each $w'\in b(\bt)$, the divisors $Z_{in}(w')$ and $Z_{ext}(w')$ are separated from the unit circle. 
Because of continuity of divisors and compactness of $b(\bt)$, \eqref{separ1} follows. 

Let us now shift $w'=w$ a bit into $\oo(b)$, while the point $\l(w')$ moves either inside $\bd$, or inside $\bd_-$ (it cannot
stay on the unit circle), and remains a simple point. So,
$$ |Z_{in}(w)|=|Z_{in}(w')|+1 \ {\rm or} \ |Z_{in}(w)|=|Z_{in}(w')|. $$
But, by \eqref{divout}, $|Z_{in}(w)|=m$, and the first equalities in \eqref{numdivin} follow. Since $|Z_{un}(w')|=1$ and $|Z(w')|=m+k$, the
second equalities in \eqref{numdivin} follow.
\end{proof}

\begin{definition}\label{defr}
A Laurent polynomial $b$ is called \emph{regular}, if either the interior divisor $Z_{in}(w)$ is contained strictly
inside $\bd$, or the exterior divisor $Z_{ext}(w)$ is contained strictly inside $\bd_-$, when $w$ varies over $\oo(b)$. 
In other words, there are two constants $0<r(b)<1<R(b)$, independent of $w$, so that either
\begin{equation}\label{reg1}
Z_{in}(w)\subset\{|z|\le r(b)<1\}, \qquad w\in\oo(b),
\end{equation}
or 
\begin{equation}\label{reg2}
Z_{ext}(w')\subset\{|z|\ge R(b)>1\}, \qquad w'\in\oo(b).
\end{equation} 
Equivalently, 
$$ \sup_{\oo(b)} |z_m(w)|\le r(b)<1 \quad {\rm or} \quad \inf_{\oo(b)} |z_{m+1}(w)|\ge R(b)>1. $$
\end{definition}

It is worth comparing \eqref{reg1}, \eqref{reg2} to \eqref{divout}, which holds for any Laurent polynomial, and to \eqref{separ1}, which holds for
any LJ-polynomial.

\begin{remark}\label{rem1}
For an arbitrary Laurent polynomial $b$ both conditions \eqref{reg1}, \eqref{reg2} hold automatically as long as $|w|$ is
large enough. Indeed, assume that
$$ |w|>2^{m+k}\|b\|_W=2^{m+k}\sum_{j=-m}^k |b_j|.  $$
Then
$$ w\,z_m^m(w)=\sum_{j=-m}^k b_jz_m^{m+j}(w), \qquad |w|\,|z_m(w)|^m\le \|b\|_W, $$
and by the assumption,
$$ 2^{m+k}\|b\|_W\,|z_m(w)|^m\le \|b\|_W, \quad (2|z_m(w)|)^m\le 2^{-k}\le1, \quad |z_m(w)|\le\frac12\,. $$
Similarly, we see that $|z_{m+1}(w)|\ge2$. Hence, given a Laurent polynomial, to check its regularity, it is sufficient to assume that
\begin{equation}\label{wbound}
|w|\le 2^{m+k}\|b\|_W=C_1.
\end{equation}
In the sequel $C_j$, $j=1,2,\ldots$, \ stay for positive constants which depend only on $b$. 
\end{remark}

\begin{theorem}\label{ljreg}
Each LJ-polynomial is regular.
\end{theorem}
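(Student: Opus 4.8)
The plan is to show that the unimodular part of the zero divisor plays the decisive role. For a LJ-polynomial $b$ and $w'\in b(\bt)$, part 2 of Lemma \ref{divuni} already gives uniform constants $0<r_1<1<r_2$ separating $Z_{in}(w')$ and $Z_{ext}(w')$ from $\bt$, and there is exactly one simple unimodular zero $\l(w')$. Part 3 says that, as $w$ crosses from $b(\bt)$ into $\oo(b)$, this unimodular zero moves either strictly inside $\bd$ (so $|Z_{in}(w)|=m$, and then $Z_{ext}(w)$ is just the former $Z_{ext}(w')$, untouched, hence still inside $\{|z|\ge r_2\}$) or strictly inside $\bd_-$ (so $|Z_{in}(w)|=m-1$... wait, no: on $b(\bt)$ we'd then have $|Z_{in}(w')|=m$ already, and the unimodular point leaving outward gives $|Z_{in}(w)|=m$ consistent with \eqref{divout}, while $Z_{in}(w)=Z_{in}(w')$ stays inside $\{|z|\le r_1\}$). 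So locally near each $w'\in b(\bt)$, one of \eqref{reg1}, \eqref{reg2} holds with constants depending only on $r_1,r_2$ and a neighborhood size.

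The first step is therefore to make this dichotomy global. I would consider the connected components of $\oo(b)$. The curve $b(\bt)$ is a Jordan curve in $\bc$, so by the Jordan curve theorem its complement has exactly two components: the bounded interior and the unbounded exterior. The winding number of $b(\bt)$ around $w$ is $\pm1$ on one of them and $0$ on the other; by \eqref{wind}, $\oo(b)$ is precisely the component on which the winding number vanishes. Hence $\oo(b)$ is \emph{connected}. (If the winding number is $0$ on the bounded component then $\oo(b)$ is that bounded Jordan domain; if it is $0$ on the unbounded one — which happens when $b(\bt)$ winds around its interior — then $\oo(b)$ is the unbounded component. Either way, one connected open set.)

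The second step is a continuity/connectedness argument on this single component $\oo(b)$. Define $N_{in}(w):=|Z_{in}(w)|$ for $w\in\oo(b)$; by \eqref{divout} this is the constant $m$, so in fact no unimodular zeros ever appear for $w\in\oo(b)$, and the divisors $Z_{in}$, $Z_{ext}$ depend continuously on $w\in\oo(b)$ and stay off $\bt$. Consider the two open subsets of $\oo(b)$ where $\sup_j\{|z_j(w)|:z_j(w)\in\bd\}<1$ in a strict sense is not automatic — the point is rather to exploit the boundary behaviour. As $w\to w'\in b(\bt)$ from inside $\oo(b)$, the divisor $Z(w)$ converges to $Z(w')$, which has a single unimodular point and the rest in $\{|z|\le r_1\}\cup\{|z|\ge r_2\}$. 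So along such an approach, at most one zero of $Z(w)$ can have modulus tending to $1$, and it does so from one definite side (inside or outside) determined by part 3 of Lemma \ref{divuni}. The key claim is that this side is the \emph{same} for every $w'\in b(\bt)$. I would prove this by a connectedness argument: the map $w'\mapsto(\text{which side})$ is locally constant on the connected curve $b(\bt)$, because the zero that approaches $\bt$ varies continuously with $w'$ and cannot jump from interior-approaching to exterior-approaching without passing through a configuration with two unimodular-limit zeros, contradicting the Jordan (no self-intersection, no cusp) property. Once the side is globally fixed — say the interior side — we get that for every $w\in\oo(b)$ near the boundary $b(\bt)$, $Z_{ext}(w)$ stays in $\{|z|\ge r_2\}$; combined with Remark \ref{rem1}, which handles $|w|\ge C_1$, and the compactness of the remaining piece of $\overline{\oo(b)}$ on which $Z_{ext}(w)$ is continuous and bounded away from $\bt$, we conclude $\inf_{\oo(b)}|z_{m+1}(w)|\ge R(b)>1$, i.e. \eqref{reg2}. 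If instead the side is the exterior one, the symmetric argument yields \eqref{reg1}.

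The main obstacle is the global constancy of the "side" in the second step: a priori different boundary points $w'$ could have their near-unit-circle zero approaching from $\bd$ for some and from $\bd_-$ for others, and ruling this out is exactly where the Jordan hypothesis must be used rather than mere continuity of divisors. I expect the cleanest way is to track the single unimodular zero $\l(w')$ as $w'$ runs over $b(\bt)$: by the no-cusp condition $b'(\l(w'))\neq0$, so near $w'$ the equation $b(z)=w$ is locally invertible and $\l$ extends analytically in $w$ off $\bt$; the sign of the relevant directional derivative (whether $\l$ moves into $\bd$ or $\bd_-$ as $w$ moves into $\oo(b)$) is then a continuous, nowhere-zero function of $w'\in b(\bt)$, hence of constant sign by connectedness of the Jordan curve. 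That this derivative is nowhere zero is precisely the statement that $\l(w')$ genuinely leaves $\bt$ (it "cannot stay on the unit circle", as noted in the proof of Lemma \ref{divuni}), which is again the no-cusp condition. Everything else — the Jordan curve theorem giving connectedness of $\oo(b)$, continuity of divisors, the compactness argument, and Remark \ref{rem1} for large $|w|$ — is routine.
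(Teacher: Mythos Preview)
Your proposal is correct and follows essentially the same route as the paper: handle a strip near $b(\bt)$ via Lemma \ref{divuni}, the remaining bounded piece of $\oo(b)$ by a compactness/continuity-of-divisors argument, and large $|w|$ by Remark \ref{rem1}. The one redundancy is your directional-derivative argument for the global constancy of the ``side'' into which $\l(w')$ moves: this is exactly the assertion in Lemma \ref{divuni}, part 3, that $|Z_{in}(w')|$ is constant on $b(\bt)$, which the paper simply quotes, so neither your derivative analysis nor the connectedness of $\oo(b)$ is needed.
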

\begin{proof}
According to Lemma \ref{divuni}, two situations may occur.

Assume first that $|Z_{in}(w')|=m$, $w'\in b(\bt)$. If $w'=w$ is shifted into a small strip $\oo_\ep:=\{w\in \oo(b): {\rm dist}(w,b(\bt))<\ep\}$, 
$\ep=\ep(b)$, the only point $\l(w')\in Z_{un}(w')$ moves inside $\bd_-$ (or, otherwise, $|Z_{in}(w)|=m+1$, that would contradict \eqref{divout}). 
So, due to continuity and compactness, there is a number $r_2=r_2(b)$, $r_1<r_2<1$ with
$$ Z_{in}(w)\subset\{|z|\le r_2\}, \qquad  w\in\oo_\ep. $$

Next, consider the set
$$ \widetilde\oo_\ep:=\{w\in\oo(b): {\rm dist}(w,b(\bt))\ge\ep, \ \ |w|\le C_1\}, $$
see \eqref{wbound}, and assume, on the contrary, that for a certain sequence $\{w_n\}$ in $\widetilde\oo_\ep$ one has
$$ |z_{m}(w_n)|\to 1. $$
Due to compactness of $\widetilde\oo_\ep$ we could let $w_n\to \tilde w\in\widetilde\oo_\ep$ and $z_m(w_n)\to \tilde z\in\bt$, 
so $\tilde z\in Z_{un}(\tilde w)$. But the latter is impossible since $\tilde w\notin b(\bt)$. Hence, for some $r_2<r_3<1$, depending on $b$,
$$ Z_{in}(w)\subset\{|z|\le r_3\}, \qquad  w\in\widetilde\oo_\ep. $$
For the rest see Remark \ref{rem1}.

2. Assume next, that $|Z_{in}(w')|=m-1$, $w'\in b(\bt)$. If $w'=w$ is shifted into a small strip $\oo_\ep':=\{w\in\oo(b): {\rm dist}(w,b(\bt))<\ep'\}$, 
$\ep'=\ep'(b)$, the only point $\l(w')\in Z_{un}(w')$ moves inside $\bd$ by \eqref{divout}. The above argument applies now to the exterior divisor,
and we come to \eqref{reg2}. The proof is complete.
\end{proof}

As it turns out, the class of regular polynomials is much wider than the class of LJ-polynomials.

Given two complex polynomials
$$ P(z)=\sum_{j=0}^n {n \choose j}\a_j z^j, \quad Q(z)=\sum_{j=0}^n {n \choose j}\b_j z^j, $$
of the same degree $n\ge1$, we say that $P$ and $Q$ are \emph{apolar} if
$$ \sum_{j=0}^n (-1)^j{n \choose j}\a_j\b_{n-j}=0. $$
Under a circular region we mean a closed or open half-plane, a disk or the exterior of a disk. The known
result of Grace \cite[Theorem 3.4.1]{rasch} states that given a circular region $C$ and apolar polynomials $P$ and $Q$, $P$ has at least one root in $C$
as long as $Z(Q)\subset C$.

\begin{proposition}
Let $b$ be a Laurent polynomial $\eqref{laur}$ with $m=1$. Assume that
\begin{equation}\label{vish}
|b_{k-s}|>{k+1 \choose s}\,|b_{-1}|
\end{equation}
for some $s=0,1,\ldots,k-1$. Then $b$ is a regular polynomial. In particular, the latter is true, when $|b_k|>|b_{-1}|$ (\emph{i.e.,}  $s=0$).
\end{proposition}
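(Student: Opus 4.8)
The plan is to verify the regularity condition \eqref{reg1} directly for $m=1$: we must show that the single interior root $z_1(w)$ of $P(z,w)=b_{-1}+(b_0-w)z+\ldots+b_kz^{1+k}$ stays in a fixed disk $\{|z|\le r(b)<1\}$ as $w$ ranges over $\oo(b)$, and by Remark \ref{rem1} it suffices to do this for $|w|\le C_1$. The natural tool is Grace's apolarity theorem quoted above: we want to bundle the inequality \eqref{vish} into a statement that $P(\cdot,w)$ cannot have \emph{all} its roots far from the origin, hence (since by \eqref{divout} exactly one root lies in $\bd$) the interior root is bounded away from $\bt$.

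First I would argue by contradiction and compactness, mimicking the proof of Theorem \ref{ljreg}: if regularity fails, there is a sequence $w_n$ with $|w_n|\le C_1$ and $|z_1(w_n)|\to1$; passing to a subsequence, $w_n\to\widetilde w$ with $|\widetilde w|\le C_1$ and $z_1(w_n)\to\widetilde z\in\bt$, so $\widetilde z$ is a unimodular root of $P(\cdot,\widetilde w)$, which forces $\widetilde w\in b(\bt)$. Thus it is enough to rule out unimodular roots, i.e.\ to show that for every $w'\in b(\bt)$ the unique boundary root $\l(w')$ is actually \emph{impossible} — equivalently, that $|Z_{in}(w')|=0$ (not $1$) for $w'\in b(\bt)$, so that combined with part 3 of Lemma \ref{divuni} (with $m=1$, giving $|Z_{in}(w')|=0$ or $1$) and continuity we get the uniform bound. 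Concretely, I want: for every $w$ with $|w|\le C_1$, the polynomial $P(\cdot,w)$, which has degree $k+1$, has at least one root in the open disk $\bd$; then since it has exactly one root in $\bd$ and none on $\bt$ when $w\in\oo(b)$, the root count is stable and a compactness argument as above finishes it.

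The heart of the matter is producing that root in $\bd$ via Grace's theorem. I would take the circular region $C=\ovl{\bd}$ (or a slightly larger disk), set $Q$ to be a polynomial of degree $k+1$ all of whose roots lie in $C$ and which is apolar to $P(\cdot,w)$, and read off from the apolarity relation the constraint it imposes on the coefficients. Writing $P(z,w)=\sum_{j=0}^{k+1}\binom{k+1}{j}\a_j(w)z^j$, so that $\binom{k+1}{j}\a_j(w)$ equals $b_{-1}$ for $j=0$, $b_0-w$ for $j=1$, and $b_{j-1}$ for $2\le j\le k+1$, the apolarity condition $\sum_{j}(-1)^j\binom{k+1}{j}\a_j(w)\b_{k+1-j}=0$ is one linear equation in the $\b$'s; the coefficient hypothesis \eqref{vish}, namely $|b_{k-s}|>\binom{k+1}{s}|b_{-1}|$, is precisely what makes it possible to solve this equation with a choice of $\b$'s whose associated polynomial $Q$ has all roots in $\ovl{\bd}$ — e.g.\ by choosing $Q$ supported on the two monomials $z^{s}$ and $z^{k+1}$ (or $z^0$ and the relevant one), reducing $Q$ to a binomial $\a z^{a}+z^{b}$ whose roots are $(k+1)$-th-type radicals of modulus $|\a|^{1/(b-a)}$, and checking this modulus is $\le1$ exactly when \eqref{vish} holds. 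The $w$-dependence enters only through the $j=1$ coefficient $b_0-w$, so I would choose $Q$ so that $\b_{k}=0$ (killing the $j=1$ term from the apolarity sum) and thereby make the construction uniform in $w$; the special case $s=0$ is the cleanest, $Q(z)=b_{-1}z^{k+1}+(\text{const})$, reproducing the remark that $|b_k|>|b_{-1}|$ suffices.

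The main obstacle I anticipate is organizing the Grace argument so that it yields a root strictly inside $\bd$ uniformly — Grace's theorem as quoted gives a root in the closed region, and one needs to upgrade ``a root in $\ovl{\bd}$ for all $w\in\oo(b)$ with $|w|\le C_1$'' to ``the interior root lies in $\{|z|\le r(b)<1\}$''. This is exactly where the compactness/continuity argument of Lemma \ref{divuni} and Theorem \ref{ljreg} is needed, together with the fact that for $w\in\oo(b)$ there are no unimodular roots at all, so a root in $\ovl{\bd}$ is automatically in $\bd$; the only point requiring care is the boundary behavior as $w\to b(\bt)$, handled by noting that $P(\cdot,w)$ depends continuously on $w$ and that along $b(\bt)$ the single boundary root would violate the root-in-$\bd$ conclusion unless $|Z_{in}|$ drops, which is consistent with part 3 of Lemma \ref{divuni}. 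A secondary, purely bookkeeping, difficulty is matching the binomial-coefficient normalization in Grace's theorem (coefficients $\binom{n}{j}\a_j$) with the plain coefficients $b_j$ of $b$, which is what turns the naive condition $|b_{k-s}|>|b_{-1}|$ into the sharp $|b_{k-s}|>\binom{k+1}{s}|b_{-1}|$ of \eqref{vish}.
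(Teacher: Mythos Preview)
Your core idea---apply Grace's apolarity theorem with a binomial $Q$ supported on $z^s$ and $z^{k+1}$, chosen so that $\b_k=0$ and hence the apolarity relation is independent of $w$---is exactly the paper's approach. What you are missing is the single observation that makes the paper's proof a two-line argument and renders your entire compactness detour unnecessary: because \eqref{vish} is a \emph{strict} inequality, the nonzero roots of $Q(z)=z^{k+1}+\binom{k+1}{s}az^s$ lie on a circle of radius
\[
\rho=\left\{\binom{k+1}{s}\left|\frac{b_{-1}}{b_{k-s}}\right|\right\}^{1/(k+1-s)}<1,
\]
so $Z(Q)\subset C:=\{|z|\le\rho\}\subsetneq\bd$. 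Apply Grace with this $C$, not with $\ovl\bd$: then for every $w$ (in particular every $w\in\oo(b)$) the polynomial $P(\cdot,w)$ has a root in $C$. Since $C\subset\bd$ and, by \eqref{divout} with $m=1$, $P(\cdot,w)$ has exactly one root in $\bd$, that root is $z_1(w)$ and $|z_1(w)|\le\rho$. This gives \eqref{reg1} with $r(b)=\rho$ directly, uniformly in $w$, with no compactness and no boundary analysis.

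Your compactness paragraph, by contrast, does not work as written. First, applying Grace with $C=\ovl\bd$ yields nothing: the conclusion ``$P(\cdot,w)$ has a root in $\ovl\bd$'' is already known from \eqref{divout} for $w\in\oo(b)$ and is trivial for $w\in b(\bt)$. Second, you invoke part 3 of Lemma \ref{divuni}, but parts 2 and 3 of that lemma are stated and proved only for LJ-polynomials (they use $|Z_{un}(w')|=1$); for a general Laurent polynomial $b(\bt)$ can self-intersect and have cusps, so neither the constancy of $|Z_{in}(w')|$ on $b(\bt)$ nor the dichotomy $m-1$/$m$ is available. Third, you write that it suffices to ``rule out unimodular roots'' for $w'\in b(\bt)$, but this is impossible: if $w'=b(\z)$ with $\z\in\bt$ then $\z$ is a unimodular root of $P(\cdot,w')$. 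In short, the compactness route is both unnecessary and, as you have it, broken; the fix is simply to shrink the circular region in Grace's theorem from $\ovl\bd$ to $\{|z|\le\rho\}$.
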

\begin{proof}
We put
\begin{equation*}
\begin{split}
P(z) &:=P(z,w)=b_{-1}+(b_0-w)z+\ldots+b_kz^{k+1}, \\
Q(z) &:=z^{k+1}+{k+1 \choose s} az^s, \quad a=(-1)^{k-s}\frac{b_{-1}}{b_{k-s}}\,,
\end{split}
\end{equation*}
and notice that $|b_{k-s}|>0$  under assumption \eqref{vish}. It is easy to check that, for each complex $w$, $P$ and $Q$ are apolar. Clearly,
$$ Z(Q)\subset C:=\{|z|\le\rho\}\subset\bd, \quad \rho=\left\{ {k+1 \choose s}\left|\frac{b_{-1}}{b_{k-s}}\right|\right\}^{\frac1{k+1-s}}<1. $$

As we know, in the case $m=1$, $Z_{in}(w)$ is a single, simple root $z_1(w)$ for $w\in\oo(b)$. 
So, by the Grace theorem, $z_1(w)\in C$, and \eqref{reg1} implies the regularity of $b$. The proof is complete.
\end{proof}

It is easy to construct a Laurent polynomial $b(z)=z^{-1}+b_kz^k$ with $|b_k|>1$ such that the curve $b(\bt)$ has self-intersection,
so $b$ is not a LJ-polynomial. Indeed, take $\z\in\bt$ with $|\z+1|<1$ and put
$$ b(z)=z^{-1}+b_2z^2, \quad b_2:=\frac1{\z(\z+1)}\,, \quad |b_2|>1. $$
Obviously, $b(\z)=b(1)$, as claimed.

There is another way to construct regular polynomials without the Jordan property. Indeed, for any regular polynomial $\b$ and an integer $m\ge2$
put $\b_m(z)=\b(z^m)$. The latter Laurent polynomial is still regular, but not a LJ-one.

\section{Linear growth of the resolvent}

The setting in the previous section was pure function theoretic. It is time now to bring in our main operator theoretic objects
under consideration: Toeplitz operators, resolvents and spectra, defined in Introduction.


By a theorem of Gohberg (see, e.g., \cite[Theorem 1.17]{bosil}), for each $b\in C(\bt)$ one has
$$ \s_{ess}(T_b)=b(\bt), \ \ \rho(T_b):=\bc\backslash\s(T_b)=\{\l\in\bc\backslash b(\bt): \ {\rm wind}(b(t)-\l)=0\}, $$
In particular, $\rho(T_b)=\oo(b)$, and we can freely replace the either of them by the other.  

The main result of the section concerns Toeplitz operators with special Laurent symbols and states that the LRG condition holds for such operators. 


\begin{theorem}\label{resol}
Let $b$ be a regular Laurent polynomial. Then
\begin{equation}\label{linresgr}
\|\bigl(T_b-w\bigr)^{-1}\|\le \frac{C(b)}{{\rm dist}\,\bigl(w,\s(T_b)\bigr)}\,, \qquad w\in\rho(T_b).
\end{equation}
In particular, the latter holds for LJ-polynomials.
\end{theorem}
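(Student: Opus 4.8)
The plan is to reduce the resolvent estimate to an explicit Wiener--Hopf type factorization of the symbol $b(z)-w$ and to control the norms of the factors uniformly as $w$ approaches the spectrum. For $w\in\rho(T_b)=\oo(b)$, Lemma~\ref{divuni}(1) gives the splitting $Z(w)=Z_{in}(w)\cup Z_{ext}(w)$ with $|Z_{in}(w)|=m$ and $|Z_{ext}(w)|=k$, and no zeros on $\bt$. Writing $P(z,w)=b_k\prod_{j=1}^{m}(z-z_j(w))\prod_{j=m+1}^{m+k}(z-z_j(w))$ and recalling $b(z)-w=z^{-m}P(z,w)$, one obtains a factorization
\begin{equation*}
b(z)-w = c(w)\, z^{-m}\prod_{j=1}^{m}(z-z_j(w))\cdot\prod_{j=m+1}^{m+k}\Bigl(1-\frac{z}{z_j(w)}\Bigr),
\end{equation*}
i.e. $b(z)-w = b_-(z,w)\,z^{-\kappa}\,b_+(z,w)$ where $b_+$ collects the exterior zeros (analytic and invertible in $\bd$), $b_-$ collects the interior zeros (analytic and invertible in $\bd_-$), and $\kappa$ accounts for the power of $z$. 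Since $w\in\oo(b)$ the total index is $0$, so in fact the Toeplitz operator $T_{b-w}$ factors as $T_{b_-}T_{b_+}$ (with the middle power absorbed), and $(T_b-w)^{-1}=T_{b_+}^{-1}T_{b_-}^{-1}=T_{1/b_+}T_{1/b_-}$. Thus it suffices to bound $\|T_{1/b_+}\|$ and $\|T_{1/b_-}\|$, which by the Toeplitz norm bound are at most $\|1/b_+\|_\infty$ and $\|1/b_-\|_\infty$ on $\bt$.

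\textbf{Where the regularity hypothesis enters.} The factor $b_+(z,w)=b_k c(w)^{-1}\prod_{j=m+1}^{m+k}(1-z/z_j(w))$ is bounded below on $\overline{\bd}$ by a quantity controlled by $\prod(1-1/|z_j(w)|)$ together with the leading constant. Here is the crux: if $b$ is regular via \eqref{reg2}, then $|z_j(w)|\ge R(b)>1$ uniformly in $w\in\oo(b)$ (using Remark~\ref{rem1} to restrict to $|w|\le C_1$ and the case $|w|>C_1$ being trivial by a Neumann series), so $\|1/b_+\|_\infty\le C_2(b)$ is bounded independently of $w$. The companion factor $b_-$ is the one that degenerates as $w\to\s(T_b)$: its interior zeros $z_j(w)$, $j\le m$, approach $\bt$, and $\|1/b_-(\cdot,w)\|_\infty\sim \prod_{j\le m}(1-|z_j(w)|)^{-1}$. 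The remaining work is to show this blows up no faster than $1/\operatorname{dist}(w,\s(T_b))$. If instead regularity holds via \eqref{reg1}, the roles of $b_+$ and $b_-$ are interchanged (pass to $T_b^* = T_{\bar b}$, whose symbol has the dual property, and use $\|(T_b-w)^{-1}\| = \|(T_{\bar b}-\bar w)^{-1}\|$), so it is enough to treat one case.

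\textbf{The main obstacle.} The hard part will be the lower bound on $|b_-(z,w)|$ for $z\in\bt$, equivalently showing that the product of the distances $\prod_{j\le m}\operatorname{dist}(z_j(w),\bt)$ is bounded below by $c(b)\operatorname{dist}(w,\s(T_b))$. This requires relating the position of the roots $z_j(w)$ to the geometry of the curve $b(\bt)$: as $w$ approaches a point $w'\in b(\bt)=\s_{ess}(T_b)$, exactly one interior root approaches $\bt$ linearly in $\operatorname{dist}(w,w')$ (this uses $b'\ne0$ on $\bt$ in the LJ case, or must be argued from regularity directly), while the other $m-1$ interior roots stay bounded away from $\bt$ by a $w$-independent constant — this last fact is precisely what \eqref{reg2} (for the exterior roots) together with Lemma~\ref{divuni}(3) and a compactness argument on $\widetilde\oo_\ep$, mimicking the proof of Theorem~\ref{ljreg}, is designed to supply. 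One also has to handle $w$ approaching the non-essential part of $\s(T_b)$ (isolated eigenvalues), where $(T_b-w)$ is Fredholm of index $0$ but not invertible; there the resolvent has a simple pole and the linear bound is immediate from the finite-dimensionality of the kernel. Assembling the local estimate near $b(\bt)$ with the global compactness bound on the rest of $\rho(T_b)$, and absorbing the leading coefficient $c(w)$ (bounded above and below since $c(w)=b_k$ up to the root normalization), yields \eqref{linresgr}; the final sentence about LJ-polynomials is then immediate from Theorem~\ref{ljreg}.
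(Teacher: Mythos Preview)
Your setup is right: Wiener--Hopf factorization of $b-w$, Krein's theorem $(T_b-w)^{-1}=T_{a_+^{-1}}T_{a_-^{-1}}$, and the observation that regularity \eqref{reg2} gives the uniform bound $\|a_+^{-1}\|_\infty\le C(b)$ (symmetrically for \eqref{reg1}). The gap is in your treatment of the \emph{other} factor.

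You propose to control $\|a_-^{-1}\|_\infty$ by proving that $\prod_{j\le m}(1-|z_j(w)|)\ge c(b)\,\operatorname{dist}(w,b(\bt))$, arguing that ``exactly one interior root approaches $\bt$ linearly \ldots\ while the other $m-1$ interior roots stay bounded away from $\bt$.'' But regularity via \eqref{reg2} says nothing about the interior roots, and Lemma~\ref{divuni}(3), which you invoke, is stated and proved only for LJ-polynomials (it relies on $b(\bt)$ having no self-intersections and no cusps). A regular polynomial can perfectly well have a self-intersection of $b(\bt)$, in which case two interior roots approach $\bt$ simultaneously as $w$ tends to the double point, and your product is $\asymp\operatorname{dist}(w,b(\bt))^2$, not $\operatorname{dist}(w,b(\bt))$. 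So this line of argument does not close for general regular $b$.

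The paper avoids this entirely by a one-line trick you missed. Since $a_-(t,w)\,a_+(t,w)=b(t)-w$, one has
\[
|a_-^{-1}(t,w)|=\frac{|a_+(t,w)|}{|b(t)-w|}\,,
\]
and $|a_+(t,w)|=\prod_{j=1}^k|t-z_{m+j}(w)|\le C_4$ because the exterior roots are bounded \emph{above} in modulus once $|w|\le C_2$ (the coefficients of $P(\,\cdot\,,w)$ are uniformly bounded). Hence $\|a_-^{-1}\|_\infty\le C_4/\operatorname{dist}(w,b(\bt))$ with no regularity assumption at all. Combined with $\|a_+^{-1}\|_\infty\le C(b)$ from \eqref{reg2}, this gives \eqref{linresgr} directly. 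The symmetric bound $\|a_+^{-1}\|_\infty\le C/\operatorname{dist}(w,b(\bt))$ follows from $|a_-(t,w)|\le 2^m|b_k|$, and in case \eqref{reg1} one uses that together with the uniform bound on $\|a_-^{-1}\|_\infty$. Your adjoint trick for swapping cases is unnecessary but harmless.

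Two smaller points. First, there is no ``non-essential part of $\s(T_b)$ consisting of isolated eigenvalues'' to worry about: for continuous $b$, $\s(T_b)\setminus b(\bt)$ is the union of the components of $\bc\setminus b(\bt)$ with nonzero winding number; on those components $T_b-w$ is Fredholm of nonzero index, not index zero with a finite-dimensional kernel. Second, since $\s(T_b)\supset b(\bt)$, the bound $C/\operatorname{dist}(w,b(\bt))$ is already stronger than \eqref{linresgr}, so no separate argument near the rest of the spectrum is needed.
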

\begin{proof}
Let $A$ be a bounded linear operator on a Hilbert space. The Neumann expansion for its resolvent is
$$ (A-w)^{-1}=-\frac1{w}\sum_{n\ge0} \lt(\frac{A}{w}\rt)^n, \qquad |w|>\|A\|, $$
so
$$ \|(A-w)^{-1}\|\le \frac1{|w|}\sum_{n\ge0}\left\|\frac{A}{w}\right\|^n=\frac1{|w|-\|A\|}\,, \quad |w|>\|A\|. $$
If $|w|\ge 2\|A\|$, then $|w|-\|A\|\ge\frac12 |w|$, and
$$ \|(A-w)^{-1}\|\le \frac2{|w|}\,, \qquad |w|\ge 2\|A\|. $$
Furthermore, for $z\in\s(A)$,
$$ {\rm dist}\,(w,\s(A))\le |w-z|\le |w|+|z|\le |w|+\|A\|\le\frac32 |w|, \ |w|\ge \frac23\,{\rm dist}\,(w,\s(A))   $$
and finally, with $A=T_b$, 
\begin{equation}\label{neum} 
\|\bigl(T_b-w\bigr)^{-1}\|\le \frac3{{\rm dist}\,(w,\s(T_b))}\,, \qquad |w|\ge 2\|T_b\|=2\|b\|_\infty=:C_2. 
\end{equation}
So, we assume in the sequel that $|w|\le C_2$.

Recall that
$$ P(z,w)=z^m\bigl(b(z)-w\bigr)=b_k\prod_{j=1}^{m+k} (z-z_j(w)), $$
$\{z_n(w)\}_{1}^{m+k}$ are labelled  as in \eqref{divout}.
The Wiener--Hopf factorization for the Laurent polynomial $b-w$ is
\begin{equation}\label{winhop}
b(z)-w=a_-(z,w)\,a_+(z,w), 
\end{equation}
where
\begin{equation}\label{winhop1}
a_-(z,w):=b_k\prod_{i=1}^m \Bigl(1-\frac{z_i(w)}{z}\Bigr), \quad a_+(z,w):=\prod_{j=1}^k(z-z_{m+j}(w)). 
\end{equation}

The bound for the resolvent relies upon the known theorem of Krein, see \cite[Theorem 1.15]{bosil},
\begin{equation}\label{reswh}
\bigl(T_b-w\bigr)^{-1}=T_{a_+^{-1}}T_{a_-^{-1}}, \quad \|\bigl(T_b-w\bigr)^{-1}\|\le \|a_+^{-1}\|_\infty\,\cdot\,\|a_-^{-1}\|_\infty.
\end{equation}

At the first stage we obtain the bounds for the factors on the RHS \eqref{reswh} {\it regardless of whether the regularity condition holds or not}.
Indeed, for $t\in\bt$,
$$ |a_+(t,w)| =\frac{|b(t)-w|}{|b_k|\,\prod_{i=1}^m|t-z_i(w)|}\ge \frac{|b(t)-w|}{|b_k|\,\prod_{i=1}^m(1+|z_i(w)|)}\ge\frac{|b(t)-w|}{2^m\,|b_k|}\,, $$
and so
\begin{equation}\label{term1}
\|a_+^{-1}\|_\infty\le\frac{2^m\,|b_k|}{{\rm dist}\,(w,b(\bt))}\,, \qquad w\in \widetilde\oo:=\oo(b) \cap \{|w|\le C_2\}.
\end{equation}

Going over to $a_-$, note that $w\in\widetilde\oo$ implies that all coefficients of the polynomial $P$ \eqref{maalg} are uniformly bounded 
(actually, only one coefficient of $P$, $b_0-w$, depends on $w$), and $b_{-m}b_k\not=0$. Hence, all the roots $z_n(w)$ are uniformly bounded 
for $w\in\widetilde\oo$
$$ \max_n |z_n(w)| \le C_3, \qquad |a_+(t,w)|\le \prod_{i=1}^k |t-z_{m+i}(w)|\le C_4, $$
and so
\begin{equation}\label{term2} 
|a_-^{-1}(t,w)|=\frac{|a_+(t,w)|}{|b(t)-w|}\,, \qquad \|a_-^{-1}\|_\infty\le\frac{C_4}{{\rm dist}\,(w,b(\bt))}\,, \qquad w\in\widetilde\oo. 
\end{equation}

At this point the regularity condition comes in, and we are able to refine either \eqref{term1} or \eqref{term2}.

1. Assume first that \eqref{reg1} holds, that is, 
$$ 1-|z_m(w)|>1-r(b)>0, \qquad w\in\widetilde\oo. $$ 
We have then
\begin{equation*}
|a_-(t,w)| =|b_k|\,\prod_{i=1}^m|t-z_i(w)|\ge |b_k|\,\prod_{i=1}^m(1-|z_i(w)|)\ge |b_k|(1-r(b))^m=C_5, 
\end{equation*}
and so
\begin{equation}\label{term1ref}
\|a_-^{-1}\|_\infty\le C_5^{-1}, \qquad w\in\widetilde\oo,
\end{equation}
which together with \eqref{term1} and \eqref{reswh} leads to LRG condition.

2. Assume next, that \eqref{reg2} holds, that is,
$$ |z_{m+1}(w)|-1>R(b)-1>0, \qquad w\in\widetilde\oo. $$
Then
\begin{equation*}
|a_+(t,w)| =\prod_{j=1}^k |t-z_{m+j}(w)|\ge \prod_{j=1}^k (|z_{m+j}(w)|-1)\ge (R(b)-1)^k=C_6>0.
\end{equation*}
Hence,
\begin{equation}\label{term2ref}
\|a_+^{-1}\|_\infty\le C_6^{-1}, \qquad w\in\widetilde\oo,
\end{equation}
which together with \eqref{term2} and \eqref{reswh} leads again to LRG condition. The proof is complete.
\end{proof}

Note that for LJ-polynomials the result is a particular case of \cite[Theorem 4]{pel}. Still, as we mentioned earlier, the class of regular 
polynomials, for which the LRG condition holds, is much wider than the class of LJ-polynomials.

\begin{example}\label{laujac}
Consider tridiagonal Toeplitz matrices with the symbol
$$ b(z)=\frac{b_{-1}}{z}+b_0+b_1z. $$
The image $b(\bt)$ of the unit circle is an ellipse on the plane as long as $|b_{-1}|\not=|b_1|$. So $b$ is the LJ-polynomial,
and LRG holds for the resolvent $(T_b-w)^{-1}$ on the resolvent set $\rho(T_b)$. The equality $|b_{-1}|=|b_1|$ means that
$b(\bt)$ is a closed interval on the plane traversed twice. Although $b$ is not a LJ-polynomial now, it is easy to check that
$$ b(t)=\a b_*(t)+\b, \quad b_*(t)=\ovl{at}+at,
$$ 
where $\a,\b, a\in\bc$. So $T_b=\a T_{b_*}+\b$, $T_{b_*}=T_{b_*}^*$ (cf. \cite{brha}). The latter implies that $T_b$ is a normal operator, so
$$ \|\bigl(T_b-w\bigr)^{-1}\|=\frac1{{\rm dist}\,(w,\s(T_b))}\,, \qquad w\in\rho(T_b). 
$$

Note also, that the simplest Laurent polynomial $b_0(z)=z+z^{-1}$ is irregular. Indeed, the roots $z_1, z_2$ are explicit, and
$$ 0<z_1(w)<1<z_2(w), \quad w\notin[-2,2], \quad \lim_{w\to 2+0} z_j(w)=1, \quad j=1,2, $$
as claimed. So, there is no direct relation between regularity and the LRG condition.
\end{example}

\section{Quadratic growth of the resolvent}\label{s3}

We say that the Quadratic Resolvent Growth (QRG) condition holds for the Toeplitz operator $T_b$ if
\begin{equation}\label{quadresgr}
\|(T_b-w)^{-1}\|\le \frac{C(b)}{{\rm dist}\,(w,\s(T_b))}\lp 1+\frac1{{\rm dist}\,(w,\s(T_b))}\rp, \ w\in\rho(T_b).
\end{equation}

Although we do not know whether the result of Theorem \ref{resol} holds for any Laurent polynomial, it is clear from
\eqref{reswh}, \eqref{term1} and \eqref{term2} (which have nothing to do with regularity), that QRG condition holds for an {\it arbitrary}
polynomial symbol $b$. Moreover, it is not hard to make sure, that the same method applies to an arbitrary {\it rational} symbol
and gives QRG condition, see \cite[Lemma 3.4]{gam}.

The main result of this section concerns the class of (in general, non-rational) symbols with infinitely many nonzero Fourier coefficients with
positive indices. On the other hand, we are able to prove the result under the strong assumption $m=1$. 
As compared to Theorem \ref{resol},  the key role is played now by the well-known formula which relates Toeplitz and Hankel matrices instead of the Wiener--Hopf factorization.

\begin{theorem}\label{infcoef}
Let $b$ be the symbol of the form
\begin{equation}\label{addas}
b(t)=\sum_{j=-1}^\infty b_jt^j, \qquad \b=\b(b):=\sum_{j\ge0}(j+1)|b_j|<\infty.
\end{equation}
Then the QRG condition $\eqref{quadresgr}$ holds for the Toeplitz operator $T_b$.
\end{theorem}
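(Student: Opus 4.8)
The plan is to mimic the factorization argument of Theorem \ref{resol}, but to use only the single ``interior factor'' of the symbol (the hypothesis $m=1$ enters here) and to replace the Wiener--Hopf splitting by the Toeplitz--Hankel relation. First I would dispose of large $w$. Since $\b<\infty$ forces $\sum_{j\ge-1}|b_j|<\infty$, the symbol $b$ lies in the Wiener algebra, in particular $b\in C(\bt)$, so Gohberg's theorem gives $\rho(T_b)=\oo(b)$, and $\|T_b\|=\|b\|_\infty<\infty$. Hence the Neumann-series estimate from the opening of the proof of Theorem \ref{resol} yields $\|(T_b-w)^{-1}\|\le 3/{\rm dist}\,(w,\s(T_b))$ for $|w|\ge C_2:=2\|T_b\|$, and it remains to treat $|w|\le C_2$, $w\in\rho(T_b)$. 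I would also record the elementary topological fact that for $w\in\rho(T_b)$ one has ${\rm dist}\,(w,\s(T_b))={\rm dist}\,(w,b(\bt))=:d$: the set $\rho(T_b)=\{w\notin b(\bt):{\rm wind}(b-w)=0\}$ is open and the winding number is locally constant off $b(\bt)$, so the topological boundary of $\rho(T_b)$ lies in $b(\bt)\subset\s(T_b)$, and therefore the nearest point of $\s(T_b)$ to an interior point $w$ already lies on $b(\bt)$.

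For $|w|\le C_2$, $w\in\oo(b)$, set $P(z,w):=z\bigl(b(z)-w\bigr)=b_{-1}+(b_0-w)z+\sum_{j\ge1}b_jz^{j+1}$. Its Taylor coefficients are absolutely summable, so $P(\cdot,w)\in A(\bd)$, with $P(0,w)=b_{-1}\ne0$ and $|P(t,w)|=|b(t)-w|\ge d>0$ on $\bt$. Because ${\rm wind}(P(\cdot,w))={\rm wind}(t)+{\rm wind}(b(t)-w)=1$, the Argument Principle shows $P(\cdot,w)$ has exactly one zero $z_1(w)$ in $\bd$, and it is simple. Thus $P(z,w)=(z-z_1(w))\,Q(z,w)$ with $Q(\cdot,w)\in A(\bd)$ free of zeros in $\ovl{\bd}$, so $1/Q(\cdot,w)\in A(\bd)\subset H^\infty$. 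Dividing by $z$ gives, on $\bt$, the factorization $b(t)-w=a_-(t,w)\,a_+(t,w)$ with $a_-(t,w)=1-z_1(w)/t$ and $a_+(t,w)=Q(t,w)$. Since the Hankel correction term in the Toeplitz--Hankel relation drops out for the analytic factor $a_+$, we get $T_{b-w}=T_{a_-}T_{a_+}=\bigl(I-z_1(w)T_{1/t}\bigr)T_Q$. Both factors are invertible: $\|z_1(w)T_{1/t}\|=|z_1(w)|<1$, so $\|(I-z_1(w)T_{1/t})^{-1}\|\le(1-|z_1(w)|)^{-1}$; and $T_Q$ is an invertible analytic Toeplitz operator with $\|T_Q^{-1}\|=\|1/Q\|_\infty=\sup_{\bt}\dfrac{|t-z_1(w)|}{|b(t)-w|}\le\dfrac2d$. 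Consequently
$$ \|(T_b-w)^{-1}\|\le\frac{2}{d\,\bigl(1-|z_1(w)|\bigr)}\,,\qquad |w|\le C_2. $$

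The heart of the matter is then the uniform lower bound $1-|z_1(w)|\ge d/C(b)$. First, $z_1(w)$ stays away from the origin: from $P(z_1(w),w)=0$, i.e.\ $b_{-1}=z_1(w)\bigl(w-b_0-\sum_{j\ge1}b_jz_1(w)^j\bigr)$, and $|w|\le C_2$, one gets $|z_1(w)|\ge c(b):=|b_{-1}|/(C_2+\b)>0$. Since $z_1(w)\ne0$, the equation $P(z_1(w),w)=0$ says $b(z_1(w))=w$; writing $z_1(w)=r\z$ with $\z\in\bt$ and $r=|z_1(w)|\in[c(b),1)$, integration of the radial derivative gives
$$ d\le|b(\z)-w|=|b(\z)-b(r\z)|=\Bigl|\int_r^1\z\,b'(s\z)\,ds\Bigr|\le C(b)\,(1-r), $$
because $b'(z)=-b_{-1}z^{-2}+\sum_{j\ge1}jb_jz^{j-1}$ is bounded by $|b_{-1}|c(b)^{-2}+\b$ on the annulus $c(b)\le|z|\le1$. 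Hence $1-|z_1(w)|\ge d/C(b)$, so $\|(T_b-w)^{-1}\|\le 2C(b)/d^{2}$ for $|w|\le C_2$, and together with the large-$w$ estimate this is exactly \eqref{quadresgr}. The main obstacle I anticipate is precisely making this last step uniform in $w$: keeping $z_1(w)$ off the origin and bounding $b'$ on the annulus so that every constant depends on $b$ alone. This is also where the hypothesis $m=1$ is used decisively: with a pole of order $m$ the factor $a_-$ carries $m$ interior zeros, and the same computation would only give growth of order ${\rm dist}^{-(m+1)}$.
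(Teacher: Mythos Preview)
Your argument is correct and complete. The factorization $b-w=(1-z_1(w)/t)\,Q$, the identity $T_{b-w}=T_{a_-}T_Q$ (since $H_{\tilde Q}=0$ for $Q\in H^\infty$), the Neumann bound for $(I-z_1S^*)^{-1}$, and the disk-algebra inversion of $Q$ all go through as written. The lower bound $|z_1(w)|\ge c(b)$ and the radial-derivative estimate $d\le C(b)(1-|z_1(w)|)$ are clean and use exactly the hypothesis $\b<\infty$.

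However, your route is \emph{not} the one taken in the paper. You carry out a genuine Wiener--Hopf factorization, essentially transplanting the proof of Theorem \ref{resol} to the non-polynomial setting: the single interior zero gives a one-term co-analytic factor $a_-$, while everything else is pushed into an outer analytic $Q\in A(\bd)$. The paper instead works with the reciprocal symbol $a=(b-w)^{-1}$ and the identity $T_aT_{a^{-1}}=I-H_aH_{\widetilde{a^{-1}}}$; the assumption $m=1$ enters as $\operatorname{rank}H_{\widetilde{a^{-1}}}=1$, so the right-hand side is a rank-one perturbation of $I$, inverted explicitly. The crucial scalar $1-b_{-1}a_1$ is then evaluated by the Residue Theorem and related to $P'(\z_0,w)$, whose size is controlled by $|w|+\b$. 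Both proofs hinge on the same phenomenon---the hypothesis $\b<\infty$ bounds a derivative of $P$ (equivalently of $b$) at or near the interior root---but package it differently: you bound $\sup|b'|$ on an annulus and integrate radially, while the paper bounds $|P'(\z_0,w)|$ pointwise. Your approach is shorter and stays closer to the method of Section~2; the paper's approach advertises the Toeplitz--Hankel identity \eqref{thform} as an alternative tool.
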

\begin{proof}
By the Wiener Theorem, the relation $w\notin b(\bt)$ implies that
$$ a(t)=a(t,w):=\frac1{b(t)-w}=\sum_{j=-\infty}^\infty a_j(w)t^j, \ \ \sum_{j=-\infty}^\infty |a_j(w)|<\infty, $$
belongs to the Wiener class, and the Toeplitz operator $T_a$ is well defined. Note, that, in general, $T_a\not=(T_b-w)^{-1}$.
Recall that $\rho(T_b)=\oo(b)$, so 
$${\rm wind}\,(b(t)-w)={\rm wind}\,a=0
$$
 for $w\in\rho(T_b)$, and, 
in particular, both operators $T_b-w$ and $T_a$ are invertible.

The main ingredient of the proof is the following well-known formula, which relates Toeplitz and Hankel matrices (see, e.g., \cite[Proposition 1.12]{bosil})
\begin{equation}\label{thform}
T_{uv}=T_u\,T_v+H_u\,H_{\tilde v}\,, \qquad u,v\in L^\infty(\bt), \quad \tilde v(t):=v(t^{-1}).
\end{equation}
Here $H_u$ is the Hankel matrix of a symbol $u$
\begin{equation}\label{hank} 
u(t)=\sum_{j=-\infty}^{\infty} u_jt^j, \qquad H_u=\|u_{i+j-1}\|_{i,j\ge1}.
\end{equation}
With $u=a$, $v=a^{-1}$, we come to
\begin{equation}\label{mainfor}
T_a\,T_{a^{-1}}=B:=I-H_a\,H_{\widetilde{a^{-1}}}.
\end{equation}
The operator on the LHS is invertible, and so is the one on the RHS. Hence
\begin{equation*}
T^{-1}_{a^{-1}}\,T^{-1}_a=B^{-1}, \qquad T^{-1}_{a^{-1}}=\bigl(T_b-w)^{-1}=B^{-1}\,T_a,
\end{equation*}
and we come to the following bound for the norm of the resolvent
\begin{equation}\label{normres1}
\Bigl\|\bigl(T_b-w\bigr)^{-1}\Bigr\|\le \bigl\|B^{-1}\bigr\|\,\|T_a\|.
\end{equation}
The last term equals
$$ \|T_a\|=\|a\|_\infty=\frac1{\min_t|b(t)-w|}=\frac1{{\rm dist}\,(w,b(\bt))}\le\frac1{{\rm dist}\,(w,\s(T_b))}\,, $$
so
\begin{equation}\label{normres2}
\Bigl\|\bigl(T_b-w\bigr)^{-1}\Bigr\|\le \frac{\bigl\|B^{-1}\bigr\|}{{\rm dist}\,(w,\s(T_b))}\,.
\end{equation}

To obtain the bound for the numerator on the RHS \eqref{normres2}, we need certain additional assumption on the symbol.

Since Hankel matrices \eqref{hank} are determined by the Fourier coefficients with positive indices, we have
$$ \dim H_{\widetilde{a^{-1}}}=1, \qquad H_{\widetilde{a^{-1}}}=\langle \cdot, e_1\rangle\,b_{-1}e_1, $$
and 
\begin{equation*}
\begin{split}
H_a\,H_{\widetilde{a^{-1}}} &=\langle \cdot, e_1\rangle\,\p, \quad \p:=b_{-1}\sum_{i=1}^\infty a_i e_i, \quad a_i=a_i(w), \\
B &=I-\langle \cdot, e_1\rangle\,\p. 
\end{split}
\end{equation*}
Since $B$ is invertible, it has no zero eigenvalue, so
$$ B\p=\p-\langle \p, e_1\rangle\,\p=(1-b_{-1}a_1)\p, \qquad 1-b_{-1}a_1\not=0. $$

It is a matter of a direct computation to verify that
$$ B^{-1}=I+\frac{\langle \cdot, e_1\rangle}{1-b_{-1}a_1}\,\p, $$
and we come to the bound
\begin{equation}\label{normbinv}
\bigl\|B^{-1}\bigr\|\le 1+\frac{\|\p\|}{|1-b_{-1}a_1|}\,.
\end{equation}

We proceed with the numerator in \eqref{normbinv}
$$ \|\p\|^2=|b_{-1}|^2\sum_{i=1}^\infty |a_i|^2\le |b_{-1}|^2\,\|a\|^2_{L^2(\bt)}=|b_{-1}|^2\,\int_{\bt}\frac{m(dt)}{|b(t)-w|^2}\,, $$
and hence
\begin{equation}\label{normphi}
\|\p\|\le \frac{|b_{-1}|}{{\rm dist}\,(w,\s(T_b))}\,.
\end{equation}

The bound for the denominator in \eqref{normbinv} is more complicated. Recall that
$$ P(z,w)=z(b(z)-w)=b_{-1}+(b_0-w)z+b_1z^2+\ldots, \quad |z|\le1, \quad b_{-1}\not=0, $$
so
$$ a(t,w)=\frac1{b(t)-w}=\frac{t}{P(t,w)}\,. $$
The function can be extended as a meromorphic function on the unit disk, continuous up to the unit circle. We also have $a(0)=0$, and this is the only
root of $a$ in the closed unit disk. As ${\rm wind}\,a=0$, by the Argument Principle, there is the only root $\z_0=\z_0(w)$ of $P$ there
$$ P(\z_0,w)=0, \qquad |\z_0|<1, \quad \z_0\not=0 \quad (P(0,w)=b_{-1}). $$

Next, for the Fourier coefficient $a_1$ we have
$$ a_1(w)=\int_{\bt} a(t,w)t^{-1}m(dt)=\int_{\bt} \frac{m(dt)}{P(t,w)}=\frac1{2\pi i}\,\int_{|\z|=1} \frac{d\z}{\z\,P(\z,w)}\,. $$
By the Residue Theorem,
\begin{equation*}
\begin{split}
a_1 &=\Bigl({\rm Res\, }_0+{\rm Res\, }_{\z_0}\Bigr)\,\frac1{\z\,P(\z,w)}=\frac1{P(0,w)}+\frac1{\z_0\,P'(\z_0,w)} \\
&=\frac1{b_{-1}}+\frac1{\z_0\,P'(\z_0,w)}. 
\end{split}
\end{equation*}
Hence,
\begin{equation*}
\frac1{1-b_{-1}a_1} =-\frac{\z_0\, P'(\z_0,w)}{b_{-1}}\,, \quad 
\frac1{|1-b_{-1}a_1|} \le \frac{|P'(\z_0,w)|}{|b_{-1}|}\,, \quad \z_0=\z_0(w)\in\bd.
\end{equation*}

It remains only to estimate $|P'(\z_0,w)|$ in ``appropriate terms''.  Note that
$$ P'(\z_0,w)=(b_0-w)+2b_1\z_0+3b_2\z_0^2+\ldots, $$
and, although the coefficients $b_j$ do not depend on $w$, the root $\z_0$ does, and it can lie close enough to the unit circle. This is where the
assumption \eqref{addas} (\emph{i.e.,} the convergence of the given series) comes in. We have
$$ |P'(\z_0,w)|\le |w|+\b, \qquad \frac1{|1-b_{-1}a_1|} \le \frac{|w|+\b}{|b_{-1}|}\,, $$
and so, by \eqref{normbinv} and \eqref{normphi},
\begin{equation}
\bigl\|B^{-1}\bigr\|\le 1+\frac{|w|+\b}{{\rm dist}\,(w,\s(T_b))}\,.
\end{equation}

To complete the argument, we distinguish two cases.

1. Let $|w|\ge 2\|b\|_\infty$, then
$$ |b(t)-w|\ge |w|-\|b\|_\infty\ge\frac{|w|}2\,, \qquad {\rm dist}\,(w,\s(T_b))\ge \frac{|w|}2\,, $$
and hence, $\bigl\|B^{-1}\bigr\|\le C(b)$ for $w\in\oo$.

2. Let $|w|\le 2\|b\|_\infty$. Then
$$ \bigl\|B^{-1}\bigr\|\le C(b)\,\lp 1+\frac1{{\rm dist}\,(w,\s(T_b))}\rp. $$
Finally, in view of \eqref{normres2}, we come to the QRG condition \eqref{quadresgr}. The proof is complete.
\end{proof}

\noindent
{\bf Problem}. Find an \emph{optimal} function $\Phi$ so that the bound \eqref{upbou} for $T=T_b$ holds for all $b\in W$, the Wiener algebra.


\begin{thebibliography}{99}

\bibitem{benk}
N.E. Benamara, N. Nikolski,Resolvent tests for similarity to a normal operator.
Proc. London Math. Soc. (3) {\bf 78}  (1999), no. 3, 585--626.

\bibitem{bgk1}  
A. Borichev, L. Golinskii and S. Kupin, A Blaschke-type condition and its application to complex Jacobi matrices, Bull. Lond. Math. Soc. {\bf 41} (2009), no. 1, 117--123.

\bibitem{bgk2}   
A. Borichev, L. Golinskii and S. Kupin, 
On zeros of analytic functions satisfying non-radial growth conditions,
Rev. Mat. Iberoam. {\bf 34} (2018), no. 3, 1153--1176.

\bibitem{bogr}
A. B\"otcher and S. Grudsky, {\it Spectral Properties of Banded Toeplitz Matrices}, SIAM, 2005.

\bibitem{bosil} 
A. B\"otcher and B. Silbermann, {\it Introduction to Large Truncated Toeplitz Matrices}, Springer Science NY, 1999.


\bibitem{bcgk}
Ph. Briet, J.-C. Cuenin, L. Golinskii and S. Kupin, Lieb-Thirring inequalities for an effective Hamiltonian of bilayer graphene,
Journal of Spectral Theory {\bf 11} (2021), 1145–-1178.

\bibitem{brha}
A. Brown and P. Halmos, Algebraic properties of Toeplitz operators, J. Reine Angew. Math. {\bf 231} (1963), 89--102. 


\bibitem{dav}
E. B. Davies, {\it Linear Operators and their Spectra}, Cambridge Studies in Advanced Mathematics {\bf 106}, CUP 2007.

\bibitem{dur}
P. Duren, On the spectrum of a Toeplitz operators, Pacific Journ. Math. {\bf 14} (1) (1964), 21--29. 

\bibitem{fago09}
S. Favorov, L. Golinskii, A Blaschke-Type condition for Analytic and Subharmonic Functions
and Application to Contraction Operators, Amer. Math Soc. Transl. {\bf 226} (2009), 37--47.

\bibitem{fago12}
S. Favorov, L. Golinskii, Blaschke-type conditions for analytic and subharmonic functions in the
unit disk: local analogs and inverse problems, Comput. Methods Funct. Theory, {\bf 12} (2012) no. 1, 151--166.


\bibitem{fago15}
S. Favorov and L. Golinskii, Blaschke-type conditions on unbounded domains, ge\-ne\-ralized convexity, and applications in
perturbation theory, Revista Mat. Iberoamericana {\bf31} (1) (2015), 1--32. 

\bibitem{gam}
L. Gambler, A study of rational Toeplitz operators, Ph.D. Thesis, SUNY at Stony Brook, 1977.

\bibitem{grsz}
U. Grenander and G. Szeg\H{o}, {\it Toeplitz Forms and Their Applications}, University of California Press, Berkeley, 1958.

\bibitem{nik}
N. Nikolski, {\it Toeplitz matrices and Operators}, Cambridge Studies in Advanced Mathematics {\bf 182}, CUP 2020.

\bibitem{nitr}
N. Nikolski and S. Treil, Linear resolvent growth of rank one perturbation of a unitary operator does not imply its similarity to
a normal operator, Journal d'analyse math\'{e}matique, {\bf 87} (2002), 415--431.

\bibitem{pel}
V. Peller, Spectrum, similarity, and invariant subspaces of Toeplitz operators, Mathematics of the USSR-Izvestiya, {\bf 29} (1) 
(1987), 133--144.

\bibitem{rasch}
Q. Rahman and G. Schmeisser, {\it Analytic Theory of Polynomials}, Oxford University Press, Oxford 2002.

\bibitem{tr}
S. Treil, Resolvent of the Toeplitz operator may increase arbitrarily fast, Journal of Mathematical Sciences {\bf 44} (6) (1989), 869.

\end{thebibliography}
\end{document}